\newtheorem{theorem}{Theorem}[section]
\newtheorem{corollary}[theorem]{Corollary}
\newtheorem{definition}[theorem]{Definition}
\newtheorem{example}[theorem]{Example}
\newtheorem{proposition}[theorem]{Proposition}
\newtheorem{remark}[theorem]{Remark}
\DeclareMathOperator{\Real}{Re}
\DeclareMathOperator{\Imaginary}{Im}
\DeclareMathOperator{\area}{Area}
\renewcommand{\Re}{\Real}
\renewcommand{\Im}{\Imaginary}
\title[Weighted toroidal graphs]{Delaunay decompositions minimizing energy of weighted toroidal graphs}
\author{Wai Yeung Lam}
\address{Department of Mathematics, University of Luxembourg, Maison du nombre, 6 avenue de la Fonte, L-4364 Esch-sur-Alzette, Luxembourg.} \email{wyeunglam@gmail.com}
\thanks{The author was partially supported by the FNR grant CoSH O20/14766753 and the Institute of Mathematical Sciences at The Chinese University of Hong Kong.}
\subjclass[2020]{Primary 52C25, 58E20, 58E11; Secondary 82B99}
\keywords{Delaunay decomposition, Maxwell–Cremona correspondence, Tutte embedding, Dirichlet energy}
\begin{document}
	\maketitle
	
	\begin{abstract}		
		Given a weighted graph on a torus, each realization to a Euclidean torus is associated with the Dirichlet energy. By minimizing the energy over all possible Euclidean structures and over all realizations within a fixed homotopy class, one obtains a harmonic map into an optimal Euclidean torus. We show that only with this optimal Euclidean structure, the harmonic map and the edge weights are induced from a weighted Delaunay decomposition. 
	\end{abstract}
	
	\section{Introduction}
	
	Let $G=(V,E,F)$ be a cellular decomposition of a topological torus $S$ where $V$, $E$ and $F$ denote the set of vertices, edges and faces. It induces a realization $f:(V,E) \to S$ of the 1-skeleton graph $(V,E)$. We assume that all faces have at least $3$ edges and the graph is \emph{essentially 3-connected}, i.e. the 1-skeleton graph of the induced cellular decomposition on the universal cover is 3-connected. We equip the graph $(V,E)$ with some positive edge weights $c:E \to \mathbb{R}_{>0}$ where $c_{ij}=c_{ji}$. We relate three aspects of embedding a weighted graph in Euclidean tori. \medskip
	
	\noindent \textbf{Energy minimization.} We parameterize the space of Euclidean tori with unit area by the upper half plane $\mathbb{H} \subset \mathbb{C}$ and write $S_{\tau}$ for the corresponding Euclidean torus given by $\tau \in \mathbb{H}$. For every straight-line mapping to a Euclidean torus $h:(V,E) \to S_{\tau}$, it is associated with the Dirichlet energy
	\begin{align}\label{eq:energy}
		D_c(h) := \frac{1}{2} \sum_{ij \in E} c_{ij} \ell_{ij}^2.
	\end{align}
    where $\ell$ is the edge length. There are several interpretations of the edge weights and the energy. One is to think of the edges as springs with force constant $c$. The Dirichlet energy is then the total energy stored in the springs.

	It is a classical result that for every fixed Euclidean torus $S_\tau$ and among all mappings $h:(V,E) \to S_{\tau}$ such that $h$ is homotopic to $f$, the energy $D_c$ has a minimizer $h_{\tau}$ unique up to translations \cite{CdV1991,Gortler2006,Lovasz2004}. The map $h_\tau$ has convex faces and is called a Tutte-like embedding \cite{Gortler2006}. It is known to be a harmonic map satisfying a discrete Laplace's equation (See Section \ref{sec:background}).

	One can further minimize the energy by varying Euclidean structures, i.e. to consider the function 
	\begin{align*}
			\mathcal{D}_c:  \mathbb{H} \times \mathcal{C}_f &\to \mathbb{R} \\
			                           (\tau, h) &\mapsto  D_c(h)
	\end{align*}
   where $ \mathcal{C}_f $ denotes the space of mappings $h$ that are homotopic to $f$. It is known that $\mathcal{D}_c$ has a unique minimizer $(\tau,h_{\tau})$ \cite{Kotani2001}. Our goal is to explore the geometric relation how the optimal Euclidean structure depends on the edge weights. For example, we shall answer a converse question. \smallskip
   
\noindent   \textbf{Question A:}
   Suppose a straight-line embedding $h:V \to S_{\tau}$ to a Euclidean torus $S_\tau$ is given with convex faces. If any, what are the positive edge weights $c$ such that $(\tau,h)$ is the minimizer of $\mathcal{D}_c$? \medskip
   
  \noindent \textbf{Maxwell-Cremona correspondence.} Every harmonic map to $\mathbb{R}^2$ is associated with a realization of the dual graph such that corresponding edges are perpendicular. A question is whether it holds for the torus. Specifically, for every harmonic map $h:(V,E) \to S_{\tau}$, its lift to the universal cover $\tilde{h}:(\tilde{V},\tilde{E}) \to \mathbb{C}$ has a conjugate map $\tilde{h}^*: \tilde{F} \to \mathbb{C}$ defined by
   \begin{align}\label{eq:conharmmap}
   	 \tilde{h}^*_{\mbox{left}(ij)} -\tilde{h}^*_{\mbox{right}(ij)} =  \sqrt{-1} \, c_{ij} (\tilde{h}_j -\tilde{h}_i) 
   \end{align}
   where $\mbox{left}(ij)$ is the left face of the oriented edge $ij$. 
   The map $\tilde{h}^*$ defines a realization of the dual decomposition of the universal cover $(\tilde{V}^*,\tilde{E}^*,\tilde{F}^*)\cong  (\tilde{F},\tilde{E},\tilde{V})$. It is harmonic with respect to the edge weights $c^{*}:= 1/c$. The edges of $\tilde{h}^*$ are orthogonal to those of $\tilde{h}$. The maps $\tilde{h}$ and $\tilde{h}^*$ are said to be reciprocal in the sense of the Maxwell–Cremona correspondence \cite{Maxwell1870}. Since $\tilde{h}$ is doubly periodic, one can deduce that $\tilde{h}^{*}$ projects to an Euclidean torus $k S_{\tau'}$ for some $k>0$ and $\tau'\in \mathbb{H}$. Here $k S_{\tau'}$ denotes the scaled copy of the Euclidean torus $S_{\tau'}$ having area $k^2$.  \medskip

   \noindent   \textbf{Question B:}
   Given an edge weight $c$ and any Euclidean torus $S_\tau$, the harmonic map $h_{\tau}:(V,E) \to S_{\tau}$ defines a conjugate harmonic map $\tilde{h}^{*}:(\tilde{V}^*,\tilde{E}^*) \to \mathbb{C}$ over the universal cover. Does $\tilde{h}^{*}$ project to the same torus $S_{\tau}$ up to scaling? It is shown in \cite{Lin2020} that this holds only for some unique Euclidean structure. \medskip

\noindent \textbf{Edge weights from weighted Delaunay decomposition.} The previous two aspects concern geometric embeddings from given edge weights. A converse question is to deduce edge weights from a given embedding.

A weighted Delaunay decomposition is also known as a power diagram. It is a generalization of the classical Delaunay decomposition where the distance function are modified by vertex weights. Its dual cell decomposition is the Voronoi diagram. One can show that the dual edge is always perpendicular to the primal edge. In case where the vertex weights are constant, one obtains the classical Delaunay decomposition. 

 Given a Delaunay decomposition of the plane or a Euclidean torus, it is associated with a canonical choice of edge weights called the \textit{cotangent weights}  \cite{Pinkall1993}. It is related to discrete complex analysis and deformations of circle packing. More generally, given a weighted Delaunay decomposition $h:V \to \mathbb{C}$, we denote $h^{\dagger}$ its dual Voronoi diagram. The \textit{canonical edge weight} $c:E \to \mathbb{R}_{>0}$ is given by
\begin{equation}\label{eq:dualprimal}
  c_{ij}=  \frac{1}{\sqrt{-1}}	\frac{ \tilde{h}^{\dagger}_{\mbox{left}(ij)} -\tilde{h}^{\dagger}_{\mbox{right}(ij)} }{ \tilde{h}_j -\tilde{h}_i }  =c_{ji} >0.
\end{equation}
A question is whether such a correspondence from geometric embeddings to edge weights provides the converse relation in the previous two aspects. We show that it is the case.

	  \begin{theorem}\label{thm}
	Let $(V,E,F)$ be a cellular decomposition of a topological torus and essentially 3-connected. Assume $c:E \to \mathbb{R}_{>0}$ be positive edge weights. Then the following statements on Euclidean structure $\tau$, discrete harmonic map $h_{\tau}$ as well as a positive number $k$ are equivalent: 
	\begin{enumerate}[(i)]
		\item $(\tau, h_{\tau})$ is the unique minimizer of $\mathcal{D}_c$ with value $k$.
		\item The conjugate map of harmonic map $h_{\tau}:V \to S_{\tau}$ projects to the same torus $S_{\tau}$ up to scaling $k>0$.
		\item $h_{\tau}$ is the weighted Delaunay decomposition such that the given weight $c$ is k-multiple of the canonical edge weight.
	\end{enumerate}
 We denote $\tau_c$ the unique Euclidean structure satisfying the conditions above and call it the optimal Euclidean structure, whose energy is written as $k_c$.
\end{theorem}

As shown in our proof, the equivalence between (i) and (ii) holds for any cellular decomposition of a torus, without the assumption of being essentially 3-connected. However, the graph being essentially 3-connected is a necessary condition for (iii) to make sense, since the graph of any weighted Delaunay decomposition is essentially 3-connected (See the proof of Balinski's theorem \cite{Ziegler}). With this assumption, the relation between (ii) and (iii) is known to Erickson and Lin \cite[Theorem 4.4]{Lin2020}. Our main contribution is their connection to (i). Our approach involves studying discrete harmonic conjugates, which is also known as the response matrix in electric networks \cite{Kenyon2015}.

On the other hand, one observes that the minimal Dirichlet energy $k_c$ is intrinsic to the weighted toroidal graph. One should be able to express $k_c$ in terms of the edge weights without involving the embedding $h$ as in equation \eqref{eq:energy}. We first state the formula and then explain the notations.
 
\begin{theorem}\label{thm:energy}
		Let $c:E \to \mathbb{R}_{>0}$ be a positive edge weight. Then the minimal Dirichlet energy is given by
	\[
	k_c := \min \mathcal{D}_c =  \sqrt{ \frac{\det_0 (\tilde{d}^T C \tilde{d})}{\det_0( d^T C d)}}
	\]
\end{theorem}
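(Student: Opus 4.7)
The plan is to minimize $\mathcal{E}_c$ in two stages: first eliminate the $|V|$-dimensional ``interior'' degrees of freedom by a Schur complement, reducing the problem to a quadratic form on the two-dimensional period space; then minimize this reduced $2\times 2$ problem under the unit-area constraint. Lift each $h\colon V\to S_\tau$ to an equivariant map $\tilde h\colon \tilde V\to \mathbb{C}$ with complex periods $p_1,p_2$, and parameterize $\tilde h$ by a pair of real cochains $x,y\colon V\to\mathbb{R}$ (values on a fixed fundamental domain) together with the vectors $u:=(\Re p_1,\Re p_2)$, $v:=(\Im p_1,\Im p_2)$ in $\mathbb{R}^2$. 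The operator $\tilde d\colon \mathbb{R}^V\oplus\mathbb{R}^2\to\mathbb{R}^E$ sends $(x,u)$ to the edge cochain $x_j-x_i+\alpha_{ij}u_1+\beta_{ij}u_2$, where $\alpha,\beta$ are winding numbers of the oriented edges around the two generators of $H_1(T^2)$; its kernel is one-dimensional. Since the weights $C$ act identically on the two real coordinates of $\tilde h$, the Dirichlet energy splits as
\[
E_c(h)=\tfrac12(x,u)^T(\tilde d^T C\tilde d)(x,u)+\tfrac12(y,v)^T(\tilde d^T C\tilde d)(y,v).
\]

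\noindent\textbf{Step 1 (Schur complement).} Block-decompose
\[
\tilde d^T C\tilde d=\begin{pmatrix} L & B^T\\ B & A\end{pmatrix},\qquad L=d^T C d,\quad A\in \mathbb{R}^{2\times2}.
\]
The key observation is that $B\mathbf{1}=0$, since $d\mathbf{1}=0$ means a constant $x$ contributes nothing to the coupling with the winding cocycles. Consequently $\mathrm{Ran}(B^T)\subset \mathrm{Ran}(L)$, so the pseudo-inverse Schur complement is well-defined: minimizing over $x$ yields $\tfrac12\,u^T M u$ with $M:=A-BL^+B^T\succ 0$, and the standard pseudo-determinant identity gives
\[
\det M=\frac{\det_0(\tilde d^T C\tilde d)}{\det_0(d^T C d)}.
\]
Doing the same for $y$, the problem reduces to minimizing $\tfrac12(u^T M u+v^T M v)$ subject to the unit-area constraint on the image torus.

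\noindent\textbf{Step 2 (area-constrained minimization).} The area of $S_\tau$ equals $\Im(\bar p_1 p_2)=\det P$, where $P:=\begin{pmatrix}u^T\\ v^T\end{pmatrix}$, so the constraint is $\det P=1$. Setting $Q:=P^TP=uu^T+vv^T$, the objective becomes $\tfrac12\,\trace(MQ)$ over symmetric positive-definite $2\times 2$ matrices $Q$ with $\det Q=1$, and by Cholesky this range is realized as $P$ varies over $SL(2,\mathbb{R})$. Diagonalizing $M$ with eigenvalues $\lambda_1,\lambda_2>0$ gives $\trace(MQ)=\lambda_1 Q_{11}+\lambda_2 Q_{22}$, and AM--GM combined with $Q_{11}Q_{22}\ge\det Q=1$ yields $\trace(MQ)\ge 2\sqrt{\det M}$, with equality exactly when $Q=\sqrt{\det M}\,M^{-1}$. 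Combining with Step 1 produces $\min\mathcal{E}_c=\sqrt{\det M}=\sqrt{\det_0(\tilde d^T C\tilde d)/\det_0(d^T C d)}$.

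\noindent\textbf{Main obstacle.} The only real subtlety is justifying the Schur-complement identity at the level of pseudo-determinants: both $L$ and $\tilde d^T C\tilde d$ are singular, and one must verify that their kernels are compatibly one-dimensional. This reduces entirely to checking $B\mathbf{1}=0$ together with the linear independence of the harmonic cocycles $\alpha,\beta$ from $\mathrm{Ran}(d)$, so that both kernels coincide with the span of $(\mathbf{1},0,0)$; once this is in place, the block-diagonalization of $\tilde d^T C\tilde d$ on $\mathrm{Ker}^\perp$ produces the identity above. The subsequent AM--GM step is routine, and the optimal $Q\propto M^{-1}$ reconstructs, via Cholesky, the period data of $h_{\tau_c}$, which is consistent with the unique modulus $\tau_c$ furnished by Theorem~\ref{thm}.
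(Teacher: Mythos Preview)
Your argument is correct, and it reaches the same Schur complement that the paper does, but by a more direct route. The paper first builds the response matrix $L$ on the period space via discrete harmonic conjugates and Stokes' theorem (Propositions \ref{prop:stokes}--\ref{prop:harmonicaction}), then proves $\min\mathcal{E}_c=k_c$ by an explicit coordinate computation of $E_c(h_\tau)$ in terms of $\tau$ and $\tau_c$ (Proposition \ref{prop:minen}), and only afterwards identifies $k_c^2=\det L$ with the Schur complement of $\Delta_{\bar o\bar o}$ inside $(\tilde d^T C\tilde d)_{\bar o\bar o}$. You bypass the response-matrix machinery entirely: you go straight to the block form of $\tilde d^T C\tilde d$, take the Schur complement to obtain the same $2\times 2$ matrix (the paper's Remark after the proof notes that this Schur complement is $\begin{pmatrix}0&1\\-1&0\end{pmatrix}L$), and then replace the paper's explicit $\tau$-computation by the clean AM--GM argument $\trace(MQ)\ge 2\sqrt{\det M}$ over $\det Q=1$. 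Your handling of the singular blocks via the pseudo-inverse is equivalent to the paper's device of deleting a vertex $o$; the observation $B\mathbf{1}=0$ is exactly what makes the two reductions agree, and the pseudo-determinant ratio matches because both kernels are spanned by $(\mathbf 1,0,0)$, so the factor $|V|^{-1}$ cancels between numerator and denominator. What the paper's longer route buys is that the response matrix $L$ simultaneously proves Theorem~\ref{thm}(2) about the conjugate map; your approach is more self-contained for Theorem~\ref{thm:energy} alone.
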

In the formula, we fix an arbitrary orientation for the edges and $d$ denotes the $E{\times}V$-incidence matrix, where $d^T$ is its transpose. $C$ is the $E{\times}E$ diagonal matrix consisting of edge weights. Thus, $\Delta:=d^T C d$ is a $V{\times}V$-matrix that represents the discrete Laplace operator, i.e. for every $g:V \to \mathbb{R}$,
\[
(\Delta g)_i = \sum_{j} c_{ij}(g_j-g_i). 
\]
Furthermore, pick a vertex $o$. Then $\det_0$ denotes the determinant of a matrix with the row and column corresponding to the vertex $o$ removed. In our case the quantity is independent of the vertex chosen. On the other hand, $\tilde{d}$ is a $E{\times}(V{+}2)$-matrix. Its first $V$ columns are those of $d$, which span the space of exact 1-forms. The last two columns of $\tilde{d}$ represent closed 1-forms with nontrivial periods, whose integrals along $\gamma_1$ and $\gamma_2$ are respectively $(1,0)$ and $(0,1)$. Particularly, the formula in Theorem \ref{thm:energy} suggests the minimal energy is related to a combinatorial problem that counts spanning trees in the graph via Kirchhoff's matrix-tree theorem.

More generally, Theorem \ref{thm} and Theorem \ref{thm:energy} hold for edge weights with arbitrary signs as long as the energy functional is positive definite (See Section \ref{sec:sign}). 

\subsection{Related work}

It is common in discrete conformal geometry \cite{Glickenstein2007,Glickenstein2011,Kenyon2002,Lam2019,Mercat2001,Springborn2008} and computer graphics \cite{Gortler2006,Gu2003} to consider edge weights in the form of equation \eqref{eq:dualprimal} from a Delaunay decomposition. On the one hand, the corresponding discrete harmonic functions describe infinitesimal deformations of circle packings \cite{Lam2015a}. On the other hand, it includes the cotangent-weight Laplacian which is obtained from the finite element discretization \cite{Pinkall1993}. Compared to them, our result is about the converse construction from a weighted graph to a Delaunay decomposition.

Delaunay decompositions minimize energy in several other ways. An example is Rippa's theorem \cite{Rippa1990}. For that setting, vertex positions are fixed while combinatorics are allowed to change. It is in contrast to our setting where combinatorics and edge weights are fixed but vertex positions are allowed to move.

Realizations of graphs to other surfaces that minimize the Dirichlet energy have been considered \cite{Urschel2021}. For a finite planar graph, one obtains the classical Tutte embedding. For hyperbolic surfaces, one also gets a harmonic map to an optimal hyperbolic surface \cite{Toru2021} and Theorem \ref{thm} holds analogously \cite{lam2024discrete}.

Our approach involves discrete harmonic 1-forms and their harmonic conjugate. Although we apply it to Euclidean tori, the method is also useful for complex affine tori \cite{lam2024space}. Similar questions can be asked for surfaces of genus larger than 1 and are related to translation surfaces. However, its connection to Delaunay decomposition is unclear. 

As seen in equation \eqref{eq:dualprimal}, we took the perspective where the edge weight is the ratio of the dual edge length to the primal edge length. It is similar to conductance in electric networks where conductance is the ratio of the current to the voltage difference. With different interpretation of the edge weights, one obtains different geometric realizations of the graph. For example, if one regard the edge weights as distance between circumcenters, one also can obtain circle patterns \cite{dimer2018}.

We also consider edge weights with arbitrary signs under the condition that the energy functional is positive definite (Definition \ref{def:energy}). Nevertheless, it is interesting to consider cases where the energy functional fails to be positive definite (See \cite{Borcea2015,Lam2015a} for example).

\section{The space of marked Euclidean tori}\label{sec:background}

We denote $G=(V, E, F)$ a cellular decomposition of a topological torus where $V$, $E$ and $F$ are the sets of vertices, edges and faces respectively. Vertices are denoted by $i,j,k$. An edge is denoted by $ij$ indicating its end points are vertices $i$ and $ j$, where $i=j$ is allowed and in that case the edge has to form a non-contractible loop on the surface. Furthermore, we denote $\tilde{G}=(\tilde{V},\tilde{E},\tilde{F})$ the lift of the cellular decomposition to the universal cover.

	We assume that $\gamma_1,\gamma_2$ are the generators of the fundamental group. We parameterize the space of Euclidean tori with unit area by the upper half plane $\mathbb{H} \subset \mathbb{C}$
such that for each $\tau \in \mathbb{H}$, the Euclidean torus $S_\tau$ is obtained as a quotient of $\mathbb{R}^2 \cong \mathbb{C}$ by translations
\[
\rho_{\gamma_1}(z)= z + \frac{1}{\sqrt{\Im \tau}}, \quad  	\rho_{\gamma_2}(z)= z + \frac{\tau}{\sqrt{\Im \tau}}.
\]

For every mapping $h:(V,E) \to S_{\tau}$ where edges are realized as straight lines, we consider its lift to the universal cover $\tilde{h}:(\tilde{V},\tilde{E}) \to \mathbb{C}$. It satisfies
\begin{equation}\label{eq:htperiods}
	\tilde{h}\circ \gamma_1=  \tilde{h} + \frac{1}{\sqrt{\Im \tau}}, \quad \tilde{h}\circ \gamma_2=  \tilde{h} +\frac{\tau}{\sqrt{\Im \tau}}
\end{equation}
In terms of the lift, the energy can be expressed as
\[
D_c(h) := \frac{1}{2} \sum_{ij \in E} c_{ij} |\tilde{h}_{\tilde{j}} - \tilde{h}_{\tilde{i}}|^2.
\]
where $\widetilde{ij} \in \tilde{E}$ is a lift of edge $ij$. The map $h$ is harmonic if it is a critical point of the energy under variation of vertex positions. Equivalently it satisfies discrete Laplace's equation, i.e. for $i \in \tilde{V}$
\begin{equation}\label{eq:harmonic}
\sum_j c_{ij} (\tilde{h}_j - \tilde{h}_i) =0.
\end{equation}
where the summation is over the adjacent edges $ij \in \tilde{E}$. In the following sections, we shall interpret $\Re \tilde{h}$ and $\Im \tilde{h}$ as integrals of harmonic 1-forms with specific periods.

\section{Discrete harmonic 1-forms}

A discrete 1-form is a function on oriented edges $\omega: \vec{E} \to \mathbb{R}$ such that $\omega_{ij}=-\omega_{ji}$ for every edge $ij$. A discrete 1-form is closed if its summation over the boundary of every oriented face is zero. For example, in the case of a triangulation, $\omega$ is closed if for every triangle $\{ijk\}$,
\[
\omega_{ij} +\omega_{jk}+ \omega_{ki} =0.
\]
For a closed discrete 1-form $\omega$ on a torus, one can consider its periods
\[
\sum_{\gamma_1} \omega = A, \quad \sum_{\gamma_2} \omega = B 
\]
where the summation is over an edge path homotopic to $\gamma_k$. Because $\omega$ is closed, the summation is independent of the path chosen. A 1-form $\omega$ is exact if there exists $f:V \to \mathbb{R}$ such that
\[
\omega_{ij}= f_j-f_i.
\] 
One can show that a 1-form is exact if and only if it is closed with vanishing periods, i.e. $(A,B)=(0,0)$ in the case of tori.

The orientation of the primal edges naturally induces an orientation for the dual edges. Given an oriented edge $ij$, the dual edge $*ij$ is oriented from right face of $ij$ to the left face. In this way, we say a 1-form $\omega$ is co-closed if it is a closed 1-form with respect to the dual decomposition,  i.e. for every vertex $i\in V$
\[
\sum_j \omega_{ij} =0
\]
where the summation is over all edges adjacent to $i$.

For every 1-form $\omega$, there is an associated 1-form $*\omega$ defined by 
\[
(*\omega)_{ij}:=c_{ij} \omega_{ij} .\] The map sending $\omega$ to $*\omega$ is a discrete analogue of the Hodge star operator.

We call  $\omega$ a harmonic 1-form on the primal decomposition $(V,E,F)$ if $\omega$ is closed and $*\omega$ is co-closed. The co-closeness implies for every vertex $i \in V$
\[	\sum_{j} (*\omega)_{ij} = \sum_j c_{ij} \omega_{ij} =0. \]
One can check that $*\omega$ is a harmonic 1-form with respect to the dual cell decomposition and edge weights $c^*:=1/c$. We also call $*\omega$ the harmonic conjugate of $\omega$.
 
 It is known \cite[Theorem 3.9]{bobenko2016} that the space of discrete harmonic 1-forms on a torus is isomorphic to $\mathbb{R}^2$ and is parameterized by the period $(A,B) \in \mathbb{R}^{2}$. 
	
\section{Response matrix over the period space} \label{sec:conjugate}

For any edge weights $c$, we consider the response matrix on the period space
	\begin{align*}
	L: \mathbb{R}^{2} &\to \mathbb{R}^{2} \\
	(A, B) & \mapsto (A^*, B^*)
\end{align*}
which maps the periods of a harmonic 1-form on $(V,E,F)$ to the periods of the conjugate harmonic 1-form. Namely, for every $(A,B) \in \mathbb{R}^2$, there exists a unique harmonic 1-form $\omega : \vec{E} \to \mathbb{R}$ such that
\[
\sum_{\gamma_1} \omega = A, \quad \sum_{\gamma_2} \omega = B. 
\]
Its conjugate would have periods
\[
\sum_{\gamma_1} *\omega = A^*, \quad \sum_{\gamma_2} *\omega = B^*. 
\]
Then we define $L(A,B)=(A^*,B^*)$. We shall relate it to the Dirichlet energy and derive an explicit form of $L$.

\begin{definition}
	We define a skew symmetric bilinear form over $\mathbb{R}^2$. For any $(A,B),(\tilde{A},\tilde{B}) \in \mathbb{R}^2$,
	\[
	\{(A,B),(\tilde{A},\tilde{B})\}:= A\tilde{B} - B \tilde{A}.
	\]
\end{definition}
   	One can check that the bilinear form is non-degenerate in the sense that $(A,B) \in \mathbb{R}^2$ satisfies 
   \[
   \{(A,B),(\tilde{A},\tilde{B})\} = 0 \quad \forall (\tilde{A},\tilde{B}) \in \mathbb{R}^2
   \]
   if and only if $(A,B)=(0,0)$.

Given a closed 1-form $\omega$ and a co-closed 1-form $\tilde{\omega}$ we consider the summation
\[
	\sum_{ij\in E} \omega_{ij} \tilde{\omega}_{ij}
\]
Because of the closeness and the co-closeness, the summation can be rewritten as the product of integrals along the boundary of a fundamental domain, which is analogous to Stokes' theorem.

\begin{proposition}\cite{bobenko2016} \label{prop:stokes}
	Suppose $\omega$ is a closed 1-form on the primal decomposition $(V,E,F)$ with periods $(A,B)$ and $\tilde{\omega}$ is co-closed, i.e. a closed 1-form on the dual decomposition $(V^*,E^*,F^*)$, with periods $(\tilde{A},\tilde{B})$. Then
	\[
	\sum_{ij \in E} \omega_{ij} \tilde{\omega}_{ij} = A\tilde{B} - B \tilde{A}= \{(A,B),(\tilde{A},\tilde{B})\}
	\]
\end{proposition}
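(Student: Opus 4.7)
The plan is to derive the identity as a discrete Stokes' theorem on the torus, extracting the pairing $A\tilde{B} - B\tilde{A}$ as the boundary contribution from a fundamental domain. The first step is to replace $\omega$ and $\tilde\omega$ by potentials on the universal cover. Since $\omega$ is closed on $(V,E,F)$, its lift to the simply connected complex $(\tilde V, \tilde E, \tilde F)$ is exact: there exists $f:\tilde V \to \mathbb{R}$ with $\omega_{ij} = f_j - f_i$, and the periods of $\omega$ are recorded in the transformation law
\[
f \circ \gamma_1 = f + A, \qquad f \circ \gamma_2 = f + B.
\]
Symmetrically, since $\tilde\omega$ is closed on the dual decomposition, its lift is exact on $(\tilde F, \tilde E, \tilde V)$, giving $g:\tilde F \to \mathbb{R}$ with $\tilde\omega_{ij} = g_{L(ij)} - g_{R(ij)}$ and analogous periodicity $g\circ \gamma_1 = g + \tilde A$, $g\circ \gamma_2 = g + \tilde B$.

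Next I would fix a fundamental domain $D$ for the torus action on the universal cover, bounded by lifts of $\gamma_1$ and $\gamma_2$, and rewrite the sum on the torus as the corresponding sum over edges in $D$:
\[
\sum_{ij \in E} \omega_{ij}\tilde\omega_{ij} = \sum_{ij \in D} (f_j - f_i)\bigl(g_{L(ij)} - g_{R(ij)}\bigr).
\]
Expanding the product and regrouping, this sum can be read in two ways: grouped by vertex, it becomes $\sum_v f_v$ times the signed sum of $\tilde\omega$ on incident edges, which vanishes at every \emph{interior} vertex by co-closeness of $\tilde\omega$; grouped by face, it becomes $\sum_F g_F$ times the signed sum of $\omega$ around $\partial F$, which vanishes at every \emph{interior} face by closeness of $\omega$. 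Therefore only terms coming from the four sides of $\partial D$ survive.

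Finally, these boundary pieces come in two pairs related by the generators $\gamma_1$ and $\gamma_2$. Using the periodicity relations for $f$ and $g$ to identify matched vertices and faces on opposite sides of $D$, each paired difference produces one of the periods $A, B, \tilde A, \tilde B$, and the remaining factor is itself a sum of $\omega$ or $\tilde\omega$ along $\gamma_1$ or $\gamma_2$, i.e.\ another period. I expect the surviving contributions to combine into exactly $A\tilde B - B\tilde A$. The main obstacle is the bookkeeping of signs and the treatment of vertices and faces lying on $\partial D$, in particular at the corners where two sides meet: one has to be careful about how boundary cells are attributed to $D$ so that quadratic ``corner'' terms cancel in pairs. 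A clean way around this is to thicken $D$ slightly so that every edge lies strictly in $D$ or outside of it, and to pair up boundary vertices and boundary faces on opposite sides via the deck group, after which the identity reduces to a purely algebraic manipulation in $f$, $g$, and the periods.
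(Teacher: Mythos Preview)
The paper does not actually supply a proof of this proposition: it is stated with a citation to \cite{bobenko2016} and used as a black box, so there is no ``paper's own proof'' to compare against. Your outline --- integrate $\omega$ and $\tilde\omega$ to equivariant potentials $f$ on $\tilde V$ and $g$ on $\tilde F$, sum by parts over a fundamental domain so that interior contributions vanish by closedness/co-closedness, and read off $A\tilde B - B\tilde A$ from the boundary identifications --- is the standard discrete Riemann bilinear argument and is correct; the corner bookkeeping you flag is real but routine once a convention for assigning boundary cells to $D$ is fixed.

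One slightly cleaner variant that avoids the fundamental-domain bookkeeping entirely, and which is closer in spirit to how the paper itself manipulates these objects later (see the matrices $M$ and $M_*$ in Section~6), is to stay on the torus and decompose $\omega = df_0 + A m_1 + B m_2$ and $\tilde\omega = d_* g_0 + \tilde A m_{*1} + \tilde B m_{*2}$ with $f_0:V\to\mathbb{R}$, $g_0:F\to\mathbb{R}$ honest functions and $m_r$, $m_{*s}$ fixed closed $1$-forms of periods $(1,0)$ and $(0,1)$. Summation by parts on the closed torus (no boundary) gives $\sum df_0\cdot\tilde\omega=0$ and $\sum \omega\cdot d_*g_0=0$ immediately, reducing the identity to the four numbers $\sum m_r m_{*s}$, which are intersection numbers computable from any concrete choice of $m_r$, $m_{*s}$ supported near transverse loops. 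Either route works; yours is more geometric, this one more algebraic.
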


For any harmonic 1-form $\omega$ and $\tilde{\omega}$ on the primal graph, we consider the product
\[
\sum_{ij \in E} c_{ij} \omega_{ij} \tilde{\omega}_{ij} = 	\sum  \omega\, {*}\tilde{\omega} = \sum *\omega \, \tilde{\omega}
\]
Since $\omega$ is closed on the primal graph and $*\omega:=c\omega$ is co-closed, we can apply Proposition \ref{prop:stokes}.

\begin{corollary}
	Suppose $\omega,\tilde{\omega}:\vec{E} \to \mathbb{R}$ are harmonic 1-forms on the primal decomposition with periods $(A,B),(\tilde{A},\tilde{B})$ respectively. Then
	\begin{align} \label{eq:bilinearform}
	\sum_{ij \in E} c_{ij} \omega_{ij} \tilde{\omega}_{ij} = \{(A,B),L(\tilde{A},\tilde{B})\} = -\{L(A,B), (\tilde{A},\tilde{B})\}
	\end{align}
\end{corollary}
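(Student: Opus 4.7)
The plan is to show that the identity chain is a direct consequence of Proposition \ref{prop:stokes} applied twice, coupled with the skew-symmetry of $\{\cdot,\cdot\}$. The only input needed is the definition of the response matrix $L$ together with the characterization of harmonic 1-forms as pairs (closed, co-closed).

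First I would establish the leftmost equality. Since $\omega$ is harmonic on the primal, it is closed with periods $(A,B)$. The conjugate $*\tilde{\omega} = c\tilde{\omega}$, by the very definition of harmonicity of $\tilde{\omega}$, is co-closed, i.e.\ a closed 1-form on the dual decomposition. Its dual periods are precisely $L(\tilde{A},\tilde{B})$ by construction of the response matrix. Thus the pair $(\omega, *\tilde{\omega})$ satisfies the hypotheses of Proposition \ref{prop:stokes}, which immediately yields
\[
\sum_{ij\in E} c_{ij}\omega_{ij}\tilde{\omega}_{ij} \;=\; \sum_{ij\in E} \omega_{ij}(*\tilde{\omega})_{ij} \;=\; \{(A,B),\,L(\tilde{A},\tilde{B})\}.
\]

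Next I would obtain the middle equality by swapping the roles of $\omega$ and $\tilde{\omega}$: now $\tilde{\omega}$ is closed on the primal with periods $(\tilde A, \tilde B)$ and $*\omega = c\omega$ is co-closed with periods $L(A,B)$, so Proposition \ref{prop:stokes} gives
\[
\sum_{ij\in E} c_{ij}\omega_{ij}\tilde{\omega}_{ij} \;=\; \sum_{ij\in E} \tilde{\omega}_{ij}(*\omega)_{ij} \;=\; \{(\tilde{A},\tilde{B}),\,L(A,B)\},
\]
using that the sum $\sum c_{ij}\omega_{ij}\tilde{\omega}_{ij}$ is symmetric in the two 1-forms. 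The rightmost equality is then immediate from the skew-symmetry $\{u,v\}=-\{v,u\}$ of the bilinear form.

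There is essentially no obstacle: the whole content of the statement is that Proposition \ref{prop:stokes} can be used in two different ways on the same sum. The only point worth emphasizing in the write-up is that the factorization $*\tilde\omega = c\tilde\omega$ lets us reinterpret the weighted primal pairing $\sum c_{ij}\omega_{ij}\tilde\omega_{ij}$ as a primal-dual Stokes pairing, and that this reinterpretation is symmetric in $\omega$ and $\tilde\omega$, which is precisely what forces $L$ to behave symmetrically under $\{\cdot,\cdot\}$ and ultimately (though not claimed here) to be a symmetric operator after identifying $\mathbb{R}^2$ with its dual via $\{\cdot,\cdot\}$.
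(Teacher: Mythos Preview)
Your proposal is correct and matches the paper's argument exactly: the paper establishes the identity by rewriting $\sum c_{ij}\omega_{ij}\tilde\omega_{ij}$ as $\sum \omega_{ij}(*\tilde\omega)_{ij} = \sum (*\omega)_{ij}\tilde\omega_{ij}$ and applying Proposition~\ref{prop:stokes} in each case, with the last equality coming from skew-symmetry of $\{\cdot,\cdot\}$. Your write-up spells out the details the paper leaves implicit, but the approach is identical.
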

\begin{proof}
	We have from Proposition \ref{prop:stokes}
	\begin{align*}
		\{(A,B),L(\tilde{A},\tilde{B})\} &= \sum  \omega_{ij}\, {*}\tilde{\omega}_{ij}  \\&= \sum c_{ij} \omega_{ij} \tilde{\omega}_{ij} \\&=  \sum  \tilde{\omega}_{ij}\, {*}\omega_{ij} \\ &=\{(\tilde{A},\tilde{B}),L(A,B)\} = -\{L(A,B), (\tilde{A},\tilde{B})\}
	\end{align*}
and the claim follows.
\end{proof}

Now we are able to derive the operator $L$.
 
\begin{proposition}\label{prop:harmonicaction}
	For any edge weight $c:E \to \mathbb{R}_{>0}$, in terms of the standard basis of $\mathbb{R}^2$, the operator $L$ over the period space has the matrix form
	\begin{align*}
		L = \left( \begin{array}{cc}
			k_c \frac{\Re \tau_c}{\Im \tau_c} & -k_c\frac{1}{\Im \tau_c} \\ k_c\frac{|\tau_c|^2}{\Im \tau_c} & -k_c\frac{\Re \tau_c}{\Im \tau_c}
		\end{array} \right)
	\end{align*}
for some $k_ c >0$ and $\tau_c \in \mathbb{H}$ in the upper half plane. Furthermore, the matrix has eigenvalues $-k_c \sqrt{-1}$ and $k_c \sqrt{-1}$ with corresponding eigenvectors $\begin{pmatrix}
	1 \\ \tau_c
\end{pmatrix}$ and $\begin{pmatrix}
1\\ \bar{\tau}_c
\end{pmatrix}$.
\end{proposition}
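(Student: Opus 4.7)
The plan is to extract two algebraic constraints on the matrix of $L$ from equation \eqref{eq:bilinearform}, then realize the resulting family as precisely the matrices of the claimed form. Writing $L = \left(\begin{smallmatrix} a & b \\ c & d \end{smallmatrix}\right)$ in the standard basis, I would first exploit the symmetry
\[
\{(A,B), L(\tilde A,\tilde B)\} \;=\; \{(\tilde A,\tilde B), L(A,B)\}
\]
from \eqref{eq:bilinearform}. A direct expansion of both sides in coordinates reduces the identity, for all inputs, to $(a+d)A\tilde B = (a+d) B \tilde A$, which forces $a+d=0$. Hence $L$ is traceless.

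Next, I would specialize to $\omega=\tilde\omega$ in \eqref{eq:bilinearform}, giving
\[
\sum_{ij \in E} c_{ij}\,\omega_{ij}^2 \;=\; -\{L(A,B),(A,B)\}.
\]
Since $c>0$, the left side is strictly positive whenever $\omega \not\equiv 0$; moreover an exact harmonic 1-form $\omega = df$ on the torus forces $f$ to be constant by the maximum principle for the combinatorial Laplacian, so nonzero harmonic 1-forms must have nonzero periods. Consequently $-\{L(A,B),(A,B)\}$ is positive definite on $\mathbb{R}^2$. Substituting $d=-a$ and expanding gives $cA^2 - 2aAB - bB^2$, whose positive definiteness is equivalent to $c>0$, $b<0$, and $\det L = -a^2 - bc > 0$.

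To finish, I would parameterize the resulting three-dimensional open cone by $(k_c,\tau_c)\in\mathbb{R}_{>0}\times\mathbb{H}$ via
\[
k_c := \sqrt{\det L}, \qquad \Im\tau_c := -k_c/b, \qquad \Re\tau_c := -a/b.
\]
A short check shows $\Im\tau_c>0$ and $|\tau_c|^2 = (a^2+k_c^2)/b^2 = -c/b$, from which substitution directly matches the four entries $a,b,c,-a$ with those of the matrix claimed in the proposition. Uniqueness of $(k_c,\tau_c)$ is immediate from solving the same linear relations backwards.

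The main content lies in step one, where the bracket symmetry encoded in \eqref{eq:bilinearform} forces tracelessness; once that is in hand, the remaining two paragraphs are straightforward algebraic identifications and the only care required is sign bookkeeping. I do not anticipate any serious obstacle.
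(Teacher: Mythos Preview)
Your proposal is correct and follows essentially the same argument as the paper: extract tracelessness from the symmetry in \eqref{eq:bilinearform}, extract positive definiteness of the quadratic form from positivity of the energy, and then parameterize the resulting cone by $(k_c,\tau_c)$. Your write-up is in fact slightly more careful than the paper's, in that you explicitly justify why nonzero periods give a nonzero harmonic $1$-form (via the maximum principle) and explicitly verify the $(2,1)$ entry through $|\tau_c|^2=-c/b$.
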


\begin{proof}
	In terms  of the standard basis of $\mathbb{R}^2$, we write the operator $L$ as a $2{\times}2$-matrix
	\[
	L = \left( \begin{array}{cc} a & b \\c & d \end{array} \right).
	\]
	Equation \eqref{eq:bilinearform} implies for any column  vectors $U,V \in \mathbb{R}^2$, 
	\[
	U^t \left( \begin{array}{cc}
		0 & 1\\ -1 & 0
	\end{array}\right) LV = -U^t L^t \left( \begin{array}{cc}
	0 & 1\\ -1 & 0
\end{array}\right) V 
	\]
	and we deduce that $a=-d$.
	
	Furthermore, since  the energy is always non-negative, Equation \eqref{eq:bilinearform}  implies for any nonzero column vector $U \in \mathbb{R}^2$,
	\begin{align*}
		0< U^t \left( \begin{array}{cc}
			0 & 1\\ -1 & 0
		\end{array}\right) L U = U^t \left( \begin{array}{cc}
		c & -a\\ -a & -b
	\end{array}\right) U
	\end{align*}
It is deduced that $\left( \begin{array}{cc}
	c & -a\\ -a & -b
\end{array}\right)$ is positive definite. Thus
\begin{align*}
	\det \left( \begin{array}{cc}
		c & -a\\ -a & -b
	\end{array}\right) = -bc -a^2 > 0, \quad
c>0, \quad
b<0
\end{align*}
We define
\begin{align*}
 k_c:=&  \sqrt{-bc -a^2} > 0 \\
	\Re \tau_c: =& -\frac{a}{b} \\
	\Im \tau_c: =& - \frac{k}{b} >0
\end{align*}
and obtain the matrix form of $L$. One can check that
\[
L \begin{pmatrix}
	1  \\ \tau_c
\end{pmatrix} =  -k_c \sqrt{-1} \begin{pmatrix}
1  \\ \tau_c
\end{pmatrix} \quad \text{and }  \quad L \begin{pmatrix}
	1  \\ \bar{\tau}_c 
\end{pmatrix} = k_c \sqrt{-1} \begin{pmatrix}
1\\ \bar{\tau}_c 
\end{pmatrix}.
\]
\end{proof}

\section{Proof of Theorem \ref{thm}}

We investigate harmonic maps to the torus from the information of the operator $L$. Throughout the section, we assume $\tau_c$, $k_c$ to be the one given in Proposition \ref{prop:harmonicaction}. 

\begin{proposition}\label{prop:minen}
 Let $\tau \in\mathbb{H}$ represent any Euclidean torus and $h_{\tau}:V \to S_{\tau}$ be the corresponding harmonic map. Then its energy satisfies
 \[
 \mathcal{D}_c(\tau,h_{\tau})= D_c(h_{\tau}) \geq k_c = \mathcal{D}_c(\tau_c,h_{\tau_c})
 \]
 and $(\tau_c,h_{\tau_c})$ is the unique minimizer  of $\mathcal{D}_c$.
\end{proposition}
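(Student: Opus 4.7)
The plan is to reduce the minimization to an explicit scalar optimization on $\mathbb{H}$ by expressing $E_c(h_\tau)$ in closed form in terms of $\tau$, $\tau_c$, and $k_c$, and then pass from harmonic maps back to arbitrary maps in $\mathcal{C}_f$ using the classical existence/uniqueness of $h_\tau$ recalled in the introduction.

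First, I lift $h_\tau$ to $\tilde{h}_\tau : \tilde{V} \to \mathbb{C}$ and split $\tilde{h}_\tau = u + \sqrt{-1}\, v$. The differentials $du$ and $dv$, viewed as 1-forms on oriented edges, are harmonic 1-forms on $(V,E,F)$: closedness is automatic, and the harmonicity condition $\sum_j c_{ij}(\tilde{h}_j-\tilde{h}_i)=0$ supplies co-closedness of each component. Reading off \eqref{eq:htperiods}, their periods are
\[
(A_1,B_1)=\Bigl(\tfrac{1}{\sqrt{\Im\tau}},\,\tfrac{\Re\tau}{\sqrt{\Im\tau}}\Bigr),\qquad (A_2,B_2)=\bigl(0,\sqrt{\Im\tau}\bigr).
\]
Since $E_c(h_\tau)=\tfrac12\sum c_{ij}(du_{ij}^2+dv_{ij}^2)$, each summand is an instance of the bilinear form identity \eqref{eq:bilinearform} applied with $\omega=\tilde\omega$.

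Second, substituting the matrix form of $L$ from Proposition \ref{prop:harmonicaction} and expanding gives the clean identity
\[
\{(A,B),L(A,B)\}=\frac{k_c}{\Im\tau_c}\,|B-\tau_c A|^2,
\]
and summing the $du$- and $dv$-contributions yields
\[
E_c(h_\tau)=\frac{k_c}{2\,\Im\tau_c}\left(\frac{(\Re\tau-\Re\tau_c)^2+(\Im\tau_c)^2}{\Im\tau}+\Im\tau\right).
\]
For fixed $\Im\tau$ the right side is minimized at $\Re\tau=\Re\tau_c$, and the residual $(\Im\tau_c)^2/\Im\tau+\Im\tau\ge 2\,\Im\tau_c$ holds by AM--GM with equality only when $\Im\tau=\Im\tau_c$. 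Hence $E_c(h_\tau)\geq k_c$ with equality if and only if $\tau=\tau_c$.

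Finally, combining this with the classical fact that $h_\tau$ is the unique-up-to-translation minimizer of $E_c$ over $\mathcal{C}_f$ restricted to $S_\tau$, I obtain $\mathcal{E}_c(\tau,h)\geq E_c(h_\tau)\geq k_c$, with equality forcing both $\tau=\tau_c$ and $h=h_{\tau_c}$ up to translations. The main obstacle is not conceptual but notational: keeping orientation conventions, period assignments, and the antisymmetry of $\{\cdot,\cdot\}$ consistent so that Proposition \ref{prop:harmonicaction} applies cleanly to both $du$ and $dv$; once the closed-form expression is in hand, the remaining minimization is routine calculus.
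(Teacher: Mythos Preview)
Your proof is correct and follows essentially the same approach as the paper: both lift $h_\tau$, read off the periods of the real and imaginary parts from \eqref{eq:htperiods}, evaluate the energy via \eqref{eq:bilinearform} and the matrix form of $L$ in Proposition~\ref{prop:harmonicaction}, and then minimize the resulting explicit function of $\tau$. The only cosmetic differences are that the paper packages the algebra as $E_c(h_\tau)=\frac{k_c|\tau-\tau_c|^2}{2\,\Im\tau_c\,\Im\tau}+k_c$ (making the inequality immediate) rather than your AM--GM split, and that you make explicit the final step of passing from harmonic maps back to all of $\mathcal{C}_f$, which the paper leaves implicit.
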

\begin{proof}
	We abbreviate $h_{\tau}$ as $h$ and write $\tilde{h}$ the lift of $h$ to the universal cover. For every oriented edge $ij$, we consider
	\[
	\omega_{ij}:= \Re (\tilde{h}_{\tilde{j}}-\tilde{h}_{\tilde{i}}),  \quad  \eta_{ij}:= \Im (\tilde{h}_{\tilde{j}}-\tilde{h}_{\tilde{i}}).
	\]
	which defines harmonic 1-forms on the torus $(V,E,F)$. Equation \eqref{eq:htperiods} implies that they have periods
	\begin{align*}
		\sum_{\gamma_1} \omega =  \frac{1}{\sqrt{\Im \tau}}, \quad & \sum_{\gamma_2} \omega =  \frac{\Re \tau}{\sqrt{\Im \tau}} \\
		\sum_{\gamma_1} \eta =  0, \quad & \sum_{\gamma_2} \eta =  \frac{\Im \tau}{\sqrt{\Im \tau}} 
	\end{align*}
	We compute its energy using Proposition \ref{prop:stokes}
	\begin{align*}
		D_c(h_{\tau})&=\frac{1}{2} \left( \sum_{ij} c_{ij} \omega_{ij}^2 +  \sum_{ij} c_{ij} \eta_{ij}^2\right) \\
		&=\frac{1}{2}\{( \frac{1}{\sqrt{\Im \tau}},\frac{\Re \tau}{\sqrt{\Im \tau}}),L( \frac{1}{\sqrt{\Im \tau}},\frac{\Re \tau}{\sqrt{\Im \tau}})\} + \frac{1}{2} \{(0,\frac{\Im \tau}{\sqrt{\Im \tau}} ),L(0,\frac{\Im \tau}{\sqrt{\Im \tau}} )\}\\
		&= \frac{k_c}{2 \Im  \tau_c \Im \tau} ( |\tau_c|^2 + |\tau|^2 - 2 \Re \tau_c \Re \tau) \\
		& =  \frac{k_c}{2 \Im  \tau_c \Im \tau} ( (\Re \tau_c - \Re \tau )^2 + |\Im \tau_c|^2 + |\Im \tau|^2 ) \\
			& =  \frac{k_c |\tau-\tau_c|^2}{2 \Im  \tau_c \Im \tau}  + k_c\\
		& \geq k_c
	\end{align*}
	The equality holds if and only if $\tau=\tau_c$.  
\end{proof} 

\begin{proposition}\label{prop:reci}
	For  any $\tau \in \mathbb{H}$, the conjugate map of $h_{\tau}:V \to S_{\tau}$ projects to $k S_{\tau}$ for some $k>0$ if and only if $(\tau,h_{\tau})=(\tau_c, h_{\tau_c})$ and $k=k_c$. 
\end{proposition}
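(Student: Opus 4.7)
The plan is to identify $\tilde h^*$ in terms of the harmonic 1-forms built from the real and imaginary parts of $\tilde h_\tau$, and then read the condition ``$\tilde h^*$ projects to $kS_\tau$'' as a system of linear equations in $(\tau,k)$ that involves only the response operator $L$ from Proposition \ref{prop:harmonicaction}. Write $\tilde h_\tau = p + \sqrt{-1}\,q$ and set $\omega := dp$ and $\eta := dq$. These are harmonic 1-forms on the primal decomposition, and by \eqref{eq:htperiods}, exactly as in the proof of Proposition \ref{prop:minen}, their periods are
\[
(A_\omega,B_\omega)=\Big(\tfrac{1}{\sqrt{\Im\tau}},\,\tfrac{\Re\tau}{\sqrt{\Im\tau}}\Big),\qquad (A_\eta,B_\eta)=(0,\sqrt{\Im\tau}).
\]

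Next, expanding the defining relation \eqref{eq:conharmmap} on the dual edge $*ij$ yields
\[
\tilde h^*_{\mathrm{left}(ij)} - \tilde h^*_{\mathrm{right}(ij)} = \sqrt{-1}\,c_{ij}\bigl(\omega_{ij}+\sqrt{-1}\,\eta_{ij}\bigr) = -\,c_{ij}\eta_{ij} + \sqrt{-1}\,c_{ij}\omega_{ij},
\]
so that, as 1-forms on the dual graph, $d\Re\tilde h^* = -*\eta$ and $d\Im\tilde h^* = *\omega$. Integrating along dual edge paths homotopic to $\gamma_1$ and $\gamma_2$ and using the definition of $L$, the two complex periods of $\tilde h^*$ are therefore
\[
P_m := -L(A_\eta,B_\eta)_m + \sqrt{-1}\,L(A_\omega,B_\omega)_m, \qquad m=1,2.
\]

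The map $\tilde h^*$ projects to $kS_\tau$ precisely when $P_1 = k/\sqrt{\Im\tau}$ and $P_2 = k\tau/\sqrt{\Im\tau}$. Substituting the explicit matrix of $L$ from Proposition \ref{prop:harmonicaction}, the first equation reduces to
\[
\frac{k_c}{\Im\tau_c\sqrt{\Im\tau}}\bigl(\Im\tau + \sqrt{-1}(\Re\tau_c-\Re\tau)\bigr) = \frac{k}{\sqrt{\Im\tau}},
\]
whose imaginary part forces $\Re\tau = \Re\tau_c$ and whose real part forces $k = k_c\,\Im\tau/\Im\tau_c$. Feeding this back into the second equation $P_2 = k\tau/\sqrt{\Im\tau}$ and comparing imaginary parts gives $\Im\tau^2 = \Im\tau_c^2$, hence $\tau=\tau_c$ and $k=k_c$. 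Since every step is an equivalence, the converse direction follows by reversing the argument.

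The main obstacle I anticipate is the orientation bookkeeping underlying the identity $d\tilde h^* = -*\eta + \sqrt{-1}\,*\omega$ on dual edges: one must check that the dual-edge orientation convention from Section~3 gives $(*\omega)_{*ij} = c_{ij}\omega_{ij}$ with the right sign, so that the period of $\tilde h^*$ along a dual cycle homotopic to $\gamma_m$ is genuinely $L(A_\cdot,B_\cdot)_m$ rather than a transposed or sign-swapped version. Once this identification is in hand, the remainder reduces to a transparent $2\times 2$ linear computation using Proposition \ref{prop:harmonicaction}.
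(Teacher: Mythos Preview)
Your proposal is correct and follows essentially the same route as the paper: both decompose $\tilde h_\tau$ into the harmonic $1$-forms $\omega,\eta$, identify $d\tilde h^* = -*\eta + \sqrt{-1}\,*\omega$, compute the two periods of $\tilde h^*$ via the response operator $L$ from Proposition~\ref{prop:harmonicaction}, and then match them against $k/\sqrt{\Im\tau}$ and $k\tau/\sqrt{\Im\tau}$. The only difference is cosmetic: the paper displays the two periods and asserts the conclusion, whereas you carry out the real/imaginary separation explicitly to extract $\Re\tau=\Re\tau_c$, then $k=k_c\Im\tau/\Im\tau_c$, then $\Im\tau=\Im\tau_c$.
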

\begin{proof}
	We abbreviate $h_{\tau}$ as $h$ and define $\omega,\eta$ as in the proof of Proposition \ref{prop:minen}. We consider the conjugates $*\omega$ and $*\eta$ which have periods
	\begin{align*}
		(\sum_{\gamma_1} *\omega,  \sum_{\gamma_2} *\omega) &= L( \frac{1}{\sqrt{\Im \tau}},\frac{\Re \tau}{\sqrt{\Im \tau}}) = \frac{k_c}{\Im \tau_c \sqrt{\Im \tau}}( \Re \tau_c - \Re \tau, |\tau_c|^2 - \Re \tau_c \Re \tau) \\
			(\sum_{\gamma_1} *\eta,  \sum_{\gamma_2} *\eta) &= L( 0,\frac{\Im \tau}{\sqrt{\Im \tau}}) = \frac{k_c}{\Im \tau_c \sqrt{\Im \tau}}(  - \Im \tau, - \Re \tau_c \Im \tau)
	\end{align*}
	By Equation \eqref{eq:conharmmap}, the conjugate harmonic map  $\tilde{h}^*: \tilde{V} \to \mathbb{C}$ is in the form
	\[
	\tilde{h}^*_{\mbox{left}(ij)} -\tilde{h}^*_{\mbox{right}(ij)}  = -*\eta_{ij} + \sqrt{-1} * \omega_{ij}
	\]
	and hence
	\begin{align}
		\tilde{h}^*\circ \gamma_1 - \tilde{h}^*&= \frac{k_c}{\sqrt{\Im \tau_c}}( \sqrt{\frac{ \Im \tau}{\Im \tau_c}} + \sqrt{-1} \,\frac{\Re \tau_c -\Re \tau}{\sqrt{\Im \tau_c \Im \tau}}) \label{eq:1} \\
		\tilde{h}^*\circ \gamma_2 - \tilde{h}^* &= \frac{k_c}{\sqrt{\Im \tau_c}}  ( \frac{\sqrt{\Im \tau} \Re \tau_c}{\sqrt{\Im \tau_c}} + \sqrt{-1} \, \frac{|\tau_c|^2 - \Re \tau_c \Re \tau}{\Im \tau_c}) \label{eq:2}
	\end{align}
On the other hand, we look for $\tau \in \mathbb{H}$ such that $\tilde{h}^*$ projects to $kS_{\tau}$ for some $k>0$, i.e. in the form
	\begin{align}
	\tilde{h}^*\circ \gamma_1 - \tilde{h}^*&=  \frac{k}{\sqrt{\Im \tau}}  \label{eq:3}\\
	\tilde{h}^*\circ \gamma_2 - \tilde{h}^* &= \frac{k \tau}{\sqrt{\Im \tau}} \label{eq:4} 
\end{align}
Comparing the imaginary part of \eqref{eq:1} and \eqref{eq:3} yields $\Re \tau=\Re \tau_c$. Comparing the real part of \eqref{eq:1} and \eqref{eq:3} implies $k_c \Im \tau = k \Im \tau_c$.  Substituting them into \eqref{eq:2} and \eqref{eq:4}, we obtain $\tau=\tau_c$ and $k=k_c$.
\end{proof}

We now relate the optimal Euclidean structure with a weighted Delaunay decomposition.

\begin{proof}[Proof of Theorem \ref{thm}]
	Statements (i) $\iff$ (ii) follow from Proposition \ref{prop:minen} and \ref{prop:reci}.
	
	To show (ii) $\implies$ (iii). Suppose $(\tau_c,h_{\tau_c})$ is the given harmonic map whose scaled conjugate map $h_{\tau_c}^{\dagger}:= h^*_{\tau_c}/k_{c}$ projects to the same torus as $h_{\tau_c}$. The map $h^{\dagger}_{\tau_c}$ is unique up to translation. From the result in \cite[Theorem 4.4]{Lin2020}, there is a unique choice for $h^{\dagger}_{\tau_c}$ such that $h_{\tau_c}$ is a realization of a weighted Delaunay decomposition and $h^{\dagger}_{\tau_c}$ is the dual Voronoi diagram. For the edge weight, it is shown by an index argument in \cite{Gortler2006} that edges have non-zero lengths. Hence, the right hand side of Equation \eqref{eq:dualprimal} is well defined and its $k_c$-multiple is equal to the edge weight $c$ by the construction of the conjugate map. 
		
	To show (iii) $\implies$ (i). Suppose $h:(V,E) \to S_{\tau}$ is induced from a weighted Delaunay decomposition and  $h^{\dagger}$ is the dual Voronoi diagram. For any $k>0$, we define the edge weight $c$ to be $k$-multiple of the canonical weight (eq. \eqref{eq:dualprimal}). The edge weight is real-valued since the primal edge under $h$ is perpendicular to the dual edge realized by $h^{\dagger}$. Indeed, it takes positive values because of the Delaunay condition. Observe that the scaled map $h^*:= k h^{\dagger}$ is the conjugate to $h$ mapping the dual graph to the scaled torus $k  S_{\tau}$. By Proposition \ref{prop:minen} and \ref{prop:reci}, we know $(\tau, h)$ is the minimizer of $\mathcal{D}_c$ with value $k$.
\end{proof}

\begin{remark}
	In fact, the Dirichlet energy associated to a weighted Delaunay decomposition is the surface area. Suppose $h:(V,E) \to S_{\tau}$ is induced from a weighted Delaunay decomposition and  $h^{\dagger}$ is the dual Voronoi diagram. We take the canonical edge weight $c$ in Equation \eqref{eq:dualprimal}. Then for every edge $ij$, the quantity
	\[
	\frac{1}{2} c_{ij} \ell_{ij}^2 = \frac{1}{2}	|\tilde{h}^{\dagger}_{\mbox{left}(ij)} -\tilde{h}^{\dagger}_{\mbox{right}(ij)}| \cdot |\tilde{h}_j -\tilde{h}_i|
	\]
	is the area of the quadrilateral formed by the four vertices $\tilde{h}^{\dagger}_{\mbox{left}(ij)}, \tilde{h}_i,  \tilde{h}^{\dagger}_{\mbox{right}(ij)}, \tilde{h}_j$ (See figure \ref{fig:kite}). Hence the energy
	\[
		D_c(h) = \frac{1}{2} \sum_{ij \in E} c_{ij} \ell_{ij}^2 = \area(S_{\tau}).
	\]
\end{remark}
	\begin{figure}[h]
	\includegraphics[width=0.8\textwidth]{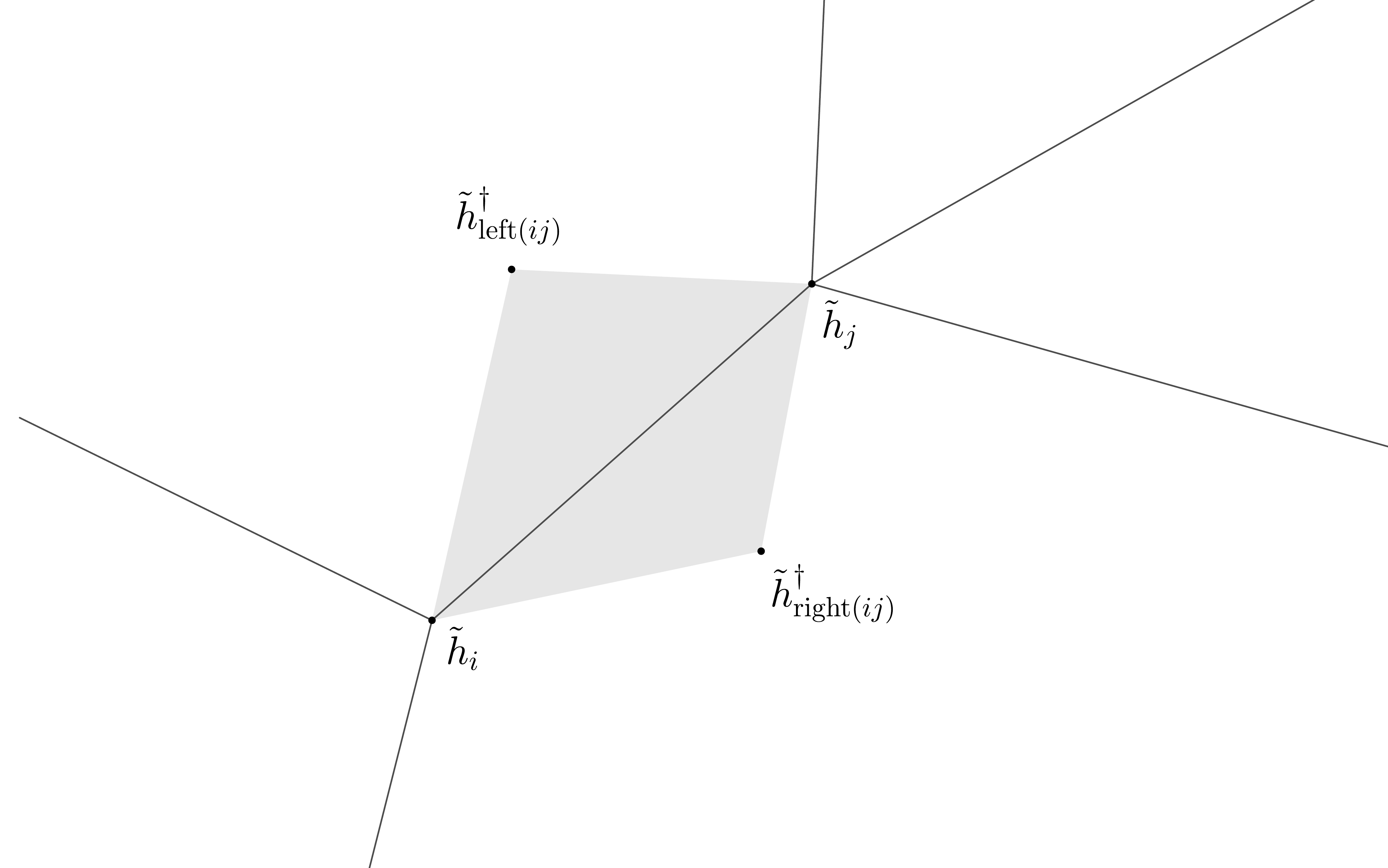}
	\caption{When the edge weight is induced from a weighted Delaunay decomposition, the energy associated to the edge $ij$ is the area of the shaded quadrilateral.}
	\label{fig:kite}
\end{figure}

\begin{remark}
	When minimizing the Dirichlet energy over Euclidean tori, it is important to normalize the tori to have unit area. If one considers discrete harmonic maps to Euclidean tori generated by $1$ and $\tau$ in the complex plane without normalization, one can modify the proof of Theorem \ref{prop:minen} to show that the Dirichlet energy decreases whenever $\Im \tau$ decreases. The Dirichlet energy achieve the minimum when $\Im \tau=0$. In this case, the Euclidean torus degenerates to a line.
\end{remark}

\section{Response matrix $L$ from Laplace operator $\Delta$}

In the section, we shall express the response matrix $L:\mathbb{R}^2 \to \mathbb{R}^2$ in terms of the discrete Laplacian and prove Theorem \ref{thm:energy}.

We first define an incidence matrix $d$. We fix an arbitrary orientation for every edge $e$ so that $e_{+}$ and $e_{-}$ represent the head and the tail of the oriented edge $e$. Then we define $d: \mathbb{R}^{V} \to \mathbb{R}^E$ by
\[
df(e) = f(e_{+}) - f(e_{-}). 
\]
We further define
\[
\Delta := d^{T} Cd
\]
where $C:\mathbb{R}^E \to \mathbb{R}^E$ is the diagonal matrix of the corresponding positive edge weights and $d^T$ is the transpose of $d$. One can show that the operator $\Delta : \mathbb{R}^V \to \mathbb{R}^V$ satisfies for every vertex $i$
\[
(\Delta f)_i = \sum_j c_{ij} (f_j-f_i)
\]
and $\Delta$ is the so called discrete Laplacian. It is positive semi-definite and the kernel consists of constant functions.

Notice that each column vector of $d$ represents a closed discrete 1-form that is exact. Any $|V|-1$ column vectors form a basis of the space of exact 1-forms on the cell decomposition $G=(V,E,F)$. To get a basis of the space of closed 1-forms, we consider two more closed 1-forms $m_1$ and $m_2$ that have nontrivial periods
\begin{gather*}
	\sum_{\gamma_1} m_1=1, \sum_{\gamma_2} m_1=0\\
	\sum_{\gamma_1} m_2=0, \sum_{\gamma_2} m_2=1
\end{gather*}
Thus, every closed 1-form with periods
\[
\sum_{\gamma_1} \omega=A, \sum_{\gamma_2} \omega=B
\]
can be expressed as
\[
\omega = d f +  \begin{pmatrix}
	\vert & \vert  \\
	m_1 & m_2 \\
		\vert & \vert
\end{pmatrix} \begin{pmatrix}
A \\ B
\end{pmatrix} =:  d f + M \begin{pmatrix}
A \\ B
\end{pmatrix} 
\]
for some $f \in \mathbb{R}^{V}$ unique up to constants. The closed 1-form $\omega$ is harmonic if
\[
0 = d^T C \omega =  d^T C d f +  d^T C M \begin{pmatrix}
	A \\ B
\end{pmatrix} = \Delta f +  d^T C M \begin{pmatrix}
A \\ B
\end{pmatrix}.
\]
To obtain a solution in terms of $f$ uniquely, we can pick a vertex $o$ and demand $f_o=0$. We write $d_{\bar{o}}$ as the submatrix of $d$ with the column corresponding to vertex $o$ removed. We also write $\Delta_{\bar{o} \bar{o}}$ as the submatrix of $\Delta$ with the column and the row corresponding to vertex $o$ removed. One can show that $\Delta_{\bar{o} \bar{o}}$ is invertible. The values of $f$ at vertices other than $o$ can be obtained via
\begin{align}\label{eq:inversef}
	f_{\bar{o}} = - \Delta_{\bar{o} \bar{o}}^{-1} d_{\bar{o}}^T C M \begin{pmatrix}
		A \\ B
	\end{pmatrix}.
\end{align}
Now we can compute the operator $L: \mathbb{R}^2 \to \mathbb{R}^2$ in terms of the edge weights. 
\begin{proposition}\label{prop:Lop}
	\[
	L = \begin{pmatrix}
		0 & -1 \\ 1 & 0
	\end{pmatrix}  (- M^T C d_{\bar{o}} \Delta_{\bar{o} \bar{o}}^{-1} d_{\bar{o}}^T C M + M^T CM )
	\]
\end{proposition}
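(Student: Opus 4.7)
The plan is to express $L$ explicitly by resolving the harmonicity condition for a closed 1-form with prescribed periods, and then reading off the conjugate periods via Proposition~\ref{prop:stokes}.

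First, following the discussion preceding the proposition, any closed 1-form $\omega$ on $G$ with periods $U=(A,B)^{T}$ can be written uniquely as $\omega = d f + M U$ once we normalize $f_o = 0$. Imposing harmonicity $d^{T} C \omega = 0$ and restricting to rows other than $o$ gives $\Delta_{\bar o \bar o} f_{\bar o} = - d_{\bar o}^{T} C M U$, which is exactly \eqref{eq:inversef}. Invertibility of $\Delta_{\bar o \bar o}$ follows from the fact that $\ker \Delta$ consists only of constant vertex functions, so $\omega$ is then fully explicit in $U$.

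Next, to obtain the periods $LU = (\tilde A, \tilde B)^{T}$ of the conjugate co-closed 1-form $*\omega = C\omega$, I would pair $C\omega$ against the two basis closed primal 1-forms $m_1, m_2$. Since the primal period of $m_k$ is the $k$th standard basis vector, Proposition~\ref{prop:stokes} gives $m_k^{T}(C\omega) = \{e_k, L U\}$, so assembling the two pairings into the column $M^{T} C \omega$ yields $J_0\, L U$ for a constant rotation-by-$\pi/2$ matrix $J_0$ dictated by the skew pairing $\{\cdot,\cdot\}$. Substituting the explicit $\omega = d_{\bar o} f_{\bar o} + M U$ produces
\[
M^{T} C \omega = \bigl(-M^{T} C d_{\bar o}\,\Delta_{\bar o \bar o}^{-1}\,d_{\bar o}^{T} C M + M^{T} C M\bigr) U,
\]
and equating the two expressions for $M^{T} C \omega$ for all $U$ identifies $L$ as the product of $J_0^{-1}$ with the parenthesized matrix, which is the claim.

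The main technical care is in sign and orientation bookkeeping: the dual cycles representing $\gamma_1,\gamma_2$ must be oriented compatibly with the convention of Proposition~\ref{prop:stokes} so that the constant matrix $J_0$ comes out exactly as in the statement. A clean consistency check is to contract the intermediate identity with $U^{T}$: by harmonicity one has $U^{T} M^{T} C \omega = \omega^{T} C \omega = 2 E_c(\omega)$, which must coincide with the Dirichlet energy computed via \eqref{eq:bilinearform}. This pins down the sign convention unambiguously and confirms the claimed matrix identity.
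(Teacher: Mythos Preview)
Your proposal is correct and follows essentially the same approach as the paper: compute $M^{T}C\omega$ and use Proposition~\ref{prop:stokes} to identify it with $J_0\,LU$, then substitute \eqref{eq:inversef}. The only cosmetic difference is that the paper first decomposes $C\omega$ explicitly in the dual basis as $d_* g + M_*(\tilde A,\tilde B)^{T}$ before applying Stokes term by term (so that $M^{T}d_* = 0$ and $M^{T}M_*$ give the rotation matrix), whereas you apply Stokes directly to the pair $(m_k, C\omega)$; your route is slightly more economical but the idea is identical.
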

\begin{proof}
	To compute the formula, we need further notations on the dual graph. Since the orientation of the primal edges is chosen, it naturally induces an orientation of the dual edges. Namely, a dual edge $*e$ is oriented from the right face of $e$ to the left. We then define the incidence matrix $d_*: \mathbb{R}^F \to \mathbb{R}^E$ similarly. The columns of $d_*$ span the space of exact 1-forms on the dual graph. We further define a $|E|{\times}2$ matrix $M_*$ such that its columns $m_{*1}$ and $m_{*2}$ represent closed 1-forms on the dual graph having nontrivial periods 
	\begin{gather*}
		\sum_{\gamma_1} m_{*1}=1, \sum_{\gamma_2} m_{*1}=0\\
		\sum_{\gamma_1} m_{*2}=0, \sum_{\gamma_2} m_{*2}=1
	\end{gather*}
	For a harmonic 1-form $\omega$, it is closed and $*\omega$ is co-closed. So we have
	\[
	C d f + CM \begin{pmatrix}
		A \\ B
	\end{pmatrix}  = C \omega = d_* g + M_* \begin{pmatrix}
		A^* \\ B^*
	\end{pmatrix}
	\]
	for some $f:V \to \mathbb{R}$ and $g:F \to \mathbb{R}$. Applying $M^T$ to both sides from the left yields
	\[
	M^T C d f + M^T CM \begin{pmatrix}
		A \\ B
	\end{pmatrix}  = (M^T d_*) g + M^T M_* \begin{pmatrix}
		\tilde{A} \\ \tilde{B}
	\end{pmatrix} = 0 + \begin{pmatrix}
		0 & 1 \\ -1 & 0
	\end{pmatrix}  \begin{pmatrix}
		A^* \\ B^*
	\end{pmatrix}
	\]
	where Proposition \ref{prop:stokes} is applied to  $M^T d_*$ and $M^T M_*$ since the rows of $M^T$ represent closed 1-forms on the primal  graph while the columns of $d_*$ and $M_*$ represents co-closed 1-forms. Because $f_{o}=0$, we have from Equation \eqref{eq:inversef}
	\begin{align*}
		\begin{pmatrix}
			A^* \\ B^*
		\end{pmatrix} &=  \begin{pmatrix}
			0 & -1 \\ 1 & 0
		\end{pmatrix}  (M^T C d_{\bar{o}} f_{\bar{o}} + M^T CM \begin{pmatrix} 
			A \\ B
		\end{pmatrix})  \\  &= \begin{pmatrix}
			0 & -1 \\ 1 & 0
		\end{pmatrix}  (- M^T C d_{\bar{o}} \Delta_{\bar{o} \bar{o}}^{-1} d_{\bar{o}}^T C M + M^T CM ) \begin{pmatrix} 
			A \\ B
		\end{pmatrix}
	\end{align*}
	and thus 
	\[
	L = \begin{pmatrix}
		0 & -1 \\ 1 & 0
	\end{pmatrix}  (- M^T C d_{\bar{o}} \Delta_{\bar{o} \bar{o}}^{-1} d_{\bar{o}}^T C M + M^T CM ).
	\]
\end{proof}

We now express the minimal energy in terms of the discrete Laplacian.

\begin{proof}[Proof of Theorem \ref{thm:energy}]
	From Proposition \ref{prop:harmonicaction}, the minimal energy is $k_c$. We know
	\begin{equation} \label{eq:ener}
		k^2_c = \det L = \det    ( M^T CM  - M^T C d_{\bar{o}} \Delta_{\bar{o} \bar{o}}^{-1} d_{\bar{o}}^T C M)
	\end{equation}
	Recall that the determinant of a block matrix satisfies
	\[
	\det \begin{pmatrix}
		P & Q \\
		Q^T & R 
	\end{pmatrix} = \det  P \det(R -Q^T P^{-1} Q)
	\]
	whenever $P$ is an invertible matrix. Applying this formula to Equation \eqref{eq:ener} yields
	\[
	k^2_c = \frac{\det \begin{pmatrix}
			\Delta_{\bar{o} \bar{o}} &  d_{\bar{o}}^T C M \\ M^T C d_{\bar{o}} & M^T CM
	\end{pmatrix}}{\det  	\Delta_{\bar{o} \bar{o}}  } =  \frac{\det (\tilde{d}^T_{\bar{o}} C \tilde{d}_{\bar{o}})}{\det ( d_{\bar{o}}^T C d_{\bar{o}})}
	\]
	where 
	\[
	\tilde{d}_{\bar{o}} = \begin{pmatrix}
		d_{\bar{o}} & M 
	\end{pmatrix}
	\]
	is a  $|E|{\times}(|V|{+}1)$ matrix. Notice that both matrices $\tilde{d}^T C \tilde{d}$ and $d^T C d$ are positive semi-definite. One has
	\[
	d^T C df =0
	\]
	if and only if $f \in \mathbb{R}^V$ is a constant function. On the other hand
	\[
	\tilde{d}^T C \tilde{d} \begin{pmatrix}
		f \\ A \\ B 
	\end{pmatrix} =0
	\]
	if and only if $f \in \mathbb{R}^{V}$  is a constant function while $A=B=0$. Thus
	\[
	k = \sqrt{  \frac{\det (\tilde{d}^T_{\bar{o}} C \tilde{d}_{\bar{o}})}{\det ( d_{\bar{o}}^T C d_{\bar{o}})} } = \sqrt{ \frac{\det_0 (\tilde{d}^T C \tilde{d})}{\det_0( d^T C d)}}
	\]
	and $\det_0$ denotes the determinant of a matrix with the row and column corresponding to the vertex $o$ removed.
\end{proof}
\begin{remark}
	The discrete Laplace operator $ d^T C d$ is a sub-matrix of  $\tilde{d}^T C \tilde{d}$. The Schur complement of the block $(d^T C d)_{\bar{o}\bar{o}}$ of the matrix $(\tilde{d}^T C \tilde{d})_{\bar{o}\bar{o}}$ is the matrix
	\[
	 \begin{pmatrix}
		0 & 1 \\ -1 & 0
	\end{pmatrix} L
	\]
	where $L$ is the response matrix that we used.
\end{remark}

\begin{example}\label{example:4}
	We illustrate how to compute the incidence matrix. Consider a cell decomposition of a torus as in Figure \ref{fig:4vertex}. There are 4 vertices and 8 edges. The edges are oriented as described in the caption and labeled as $e_j$ with corresponding conductance $c_j$, where $j=1,2,\dots 8$. The incidence matrix $d$ has size $|V|$ by $|E|$. The $(i,j)$ entry corresponds to
	\[
	df^{(i)}(e_j)= f^{(i)}(e_{j+}) - f^{(i)}(e_{j-}).
	\]
	where $f^{(i)}:V \to \mathbb{R}$ takes value 1 at vertex $i$ and 0 otherwise. The symbols $e_{j+}$ and $e_{j-}$ indicate the vertices that is the head and the tail of the oriented edge $e_j$. One gets
	\[
	d= \left(
	\begin{array}{cccc}
		-1 & 1 & 0 & 0 \\
		0 & -1 & 0 & 1 \\
		0 & 0 & -1 & 1 \\
		-1 & 0 & 1 & 0 \\
		1 & -1 & 0 & 0 \\
		0 & 0 & 1 & -1 \\
		1 & 0 & -1 & 0 \\
		0 & 1 & 0 & -1 \\
	\end{array}
	\right),
	\]
	where the columns span the space of exact 1-forms. To get closed 1-forms that are not exact, consider a vertical line oriented homotopic to $\gamma_2$ not passing through any vertex, say the dotted vertical line. It intersects with the oriented edges $e_5$ and $e_6$ once positively. We then define a 1-form $m_1$ which is equal to 1 along $e_5$ and $e_6$ but zero otherwise. Then we can check $m_1$ is a closed 1-form with non-vanishing periods
	\[
		\sum_{\gamma_1} m_1=1,\quad \sum_{\gamma_2} m_1=0.
	\]
    Similarly, the dotted horizontal line is homotopic to $\gamma_1$ and intersect $e_7,e_8$ once positively. We then define a 1-form $m_2$ which is equal to 1 along $e_7$ and $e_8$ but zero otherwise. Then we can check $m_2$ is a closed 1-form with non-vanishing periods
    \[
    \sum_{\gamma_1} m_2=0,\quad \sum_{\gamma_2} m_2=1.
    \]
	It yields the columns of matrix $M$
	\[
	M=\left(
	\begin{array}{cc}
		0 & 0 \\
		0 & 0 \\
		0 & 0 \\
		0 & 0 \\
		1 & 0 \\
		1 & 0 \\
		0 & 1 \\
		0 & 1 \\
	\end{array}
	\right).
	\] 
	From these, one can compute the operator $L$ via Proposition \ref{prop:Lop}.
	
	\begin{figure}
			\includegraphics[width=0.4\textwidth]{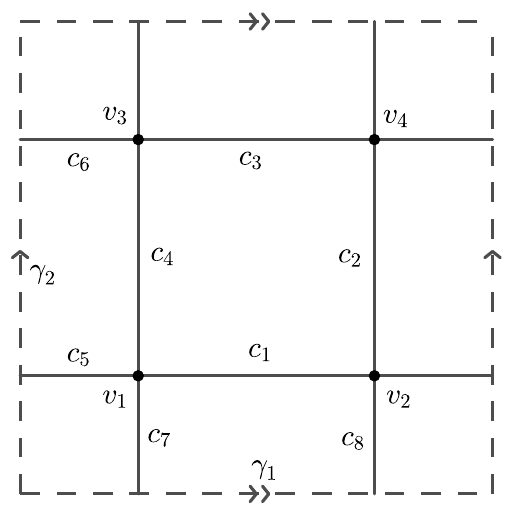}
		\caption{A cell decomposition of the torus with 4 vertices. All horizontal edges are oriented to the right. All vertical edges are oriented upward.}
		\label{fig:4vertex}
	\end{figure}
\end{example}

We consider the simplest example where we have space to write down the operator $L$.

\begin{example} \label{example}
	We investigate a one-vertex triangulation of a Euclidean torus as in Figure \ref{fig:1vertex} (Left). It is essentially 3-connected since its lift to the universal cover is the triangular lattice which is 3-connected. We pick an orientation of edges as indicated in the figure. Since for every edge the head and the tail coincide, we have the incidence matrix vanishing. We also obtain the matrix $M$ by considering the intersection with a vertical line and a horizontal line respectively as in Example \ref{example:4}. Altogether we have
	\begin{align*}
		d=\left(
		\begin{array}{c}
			0 \\
			0 \\
			0 \\
		\end{array}
		\right), \quad M=\left(
		\begin{array}{cc}
			 1 & 0 \\
			1 & 1 \\
			 0 & 1 \\
		\end{array}
		\right), 
	\end{align*}
	and obtain via Proposition \ref{prop:Lop}
	\[
	L=\left(
	\begin{array}{cc}
		-c_2 & -c_2-c_3 \\
		c_1+c_2 & c_2 \\
	\end{array}
	\right).
	\]
    One finds that the minimal energy is
	\[
	k_c = \sqrt{\det(L)}= \sqrt{c_1 c_2 + c_2 c_3 + c_3 c_1}
	\]
	and the optimal Euclidean structure is
	\[
	\tau_c = \frac{- c_2 + \mathbf{i}\sqrt{c_1 c_2 + c_2 c_3 + c_3 c_1}}{c_2 +c_3}  
	\]
	where $\mathbf{i}=\sqrt{-1}$. Observe that $\Re \tau_c <0$ for this weighted graph. In other words,  with this combinatoric, any realization to a Euclidean torus $S_{\tau}$ with $\Re \tau \geq0$ cannot be the minimizer of the energy $\mathcal{D}_c$ for any positive edge weights $c$.
	\begin{figure}[h]
	\includegraphics[width=0.4\textwidth]{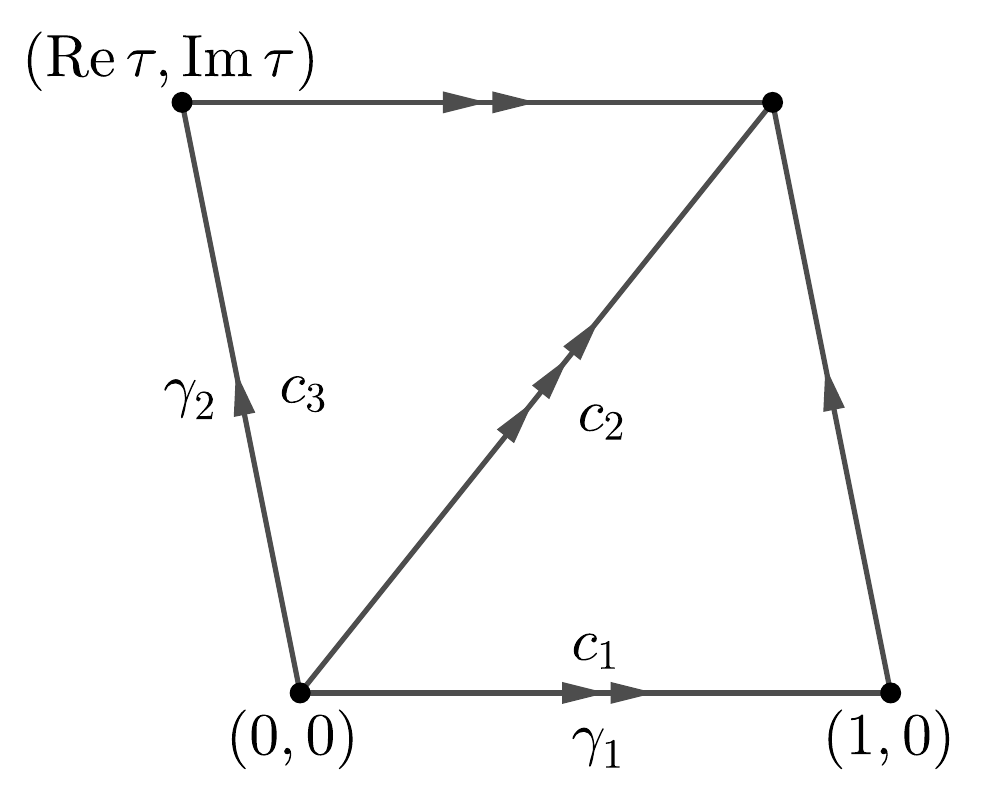}
	\includegraphics[width=0.4\textwidth]{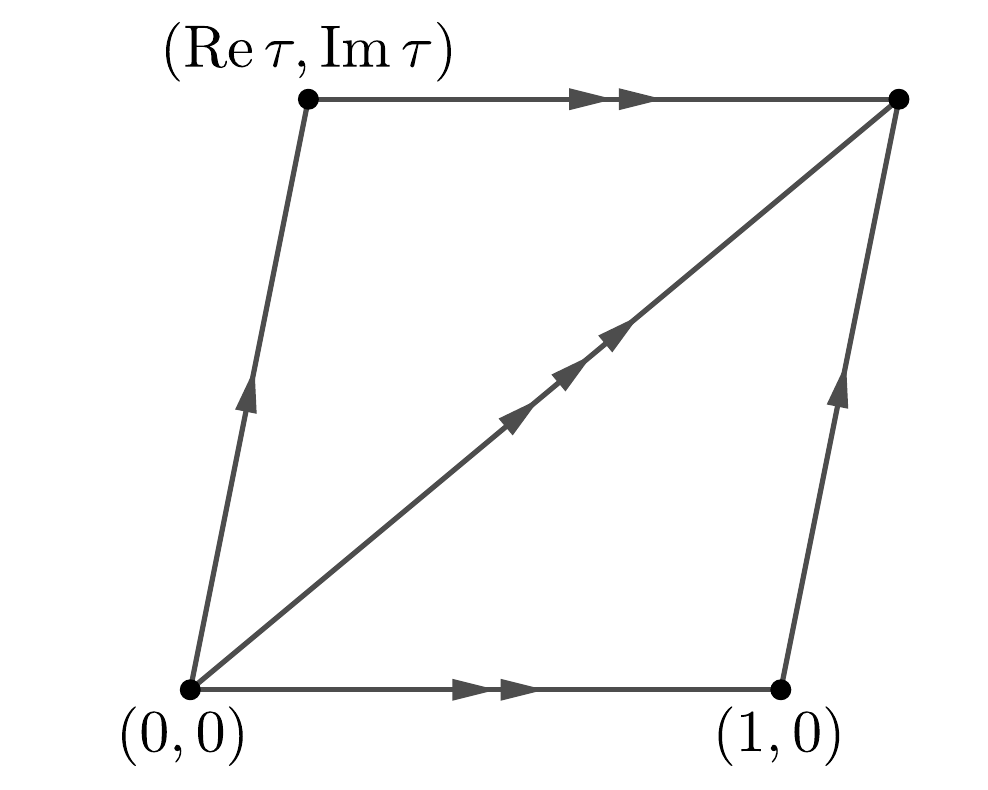}
		\caption{A one-vertex triangulation of the torus is shown on the left. Opposite sides are identified. The orientation of the edges are indicated by the arrows. The one-vertex triangulation on the right cannot be the minimize of $\mathcal{D}_c$, since the Euclidean torus has $\Re \tau >0$.}
		\label{fig:1vertex}
	\end{figure}
\end{example}

\begin{example}
	There are many more straight-line embeddings that could not be minimizers of the Dirichlet energy $\mathcal{D}_c$ for any choice of positive edge weight $c$. Figure \ref{fig:1vertexwrap} shows another one-vertex triangulation of the square torus  with a long diagonal wrapping around the torus. Here the square torus has $\tau=\sqrt{-1}$, particularly $\Re \tau=0$. Repeating the computation in Example \ref{example} shows that the optimal Euclidean torus has $\Re \tau_c <0$. Thus, with the vertices fixed on the square torus, this combinatoric can not be induced from any weighted Delaunay decomposition. 
	\begin{figure}[h]
		\includegraphics[width=0.28\textwidth]{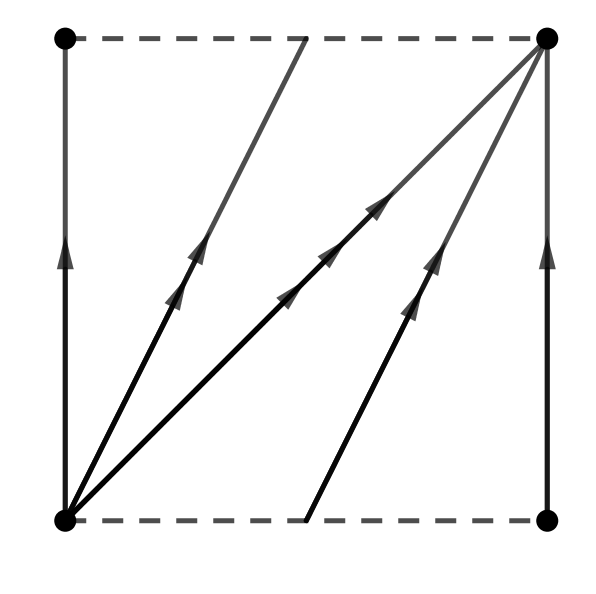}
		\caption{A one-vertex triangulation of the square torus that is not a minimizer of  $\mathcal{D}_c$ for any choice of positive edge weight $c$.}
		\label{fig:1vertexwrap}
	\end{figure}
\end{example}

\section{Weights with arbitrary sign}\label{sec:sign}

In Theorem \ref{thm}, the assumption about edge weights being positive is not compulsory. We can allow the edge weights taking negative values as long as the energy functional is positive definite over the space of closed discrete 1-forms.

\begin{definition}\label{def:energy}
	Let $(V,E,F)$ be a cellular decomposition of a torus. The edge weight $c:E \to \mathbb{R}$ is said to be non-degenerate if for every closed 1-form $\omega:\vec{E} \to \mathbb{R}$
\[
D_c(\omega)= \frac{1}{2}\sum_{ij} c_{ij} \omega_{ij\in E}^2 \geq 0
\]
and the equality holds if and only if $\omega \equiv 0$.
\end{definition}

\begin{proposition}
Suppose the edge weight $c:E \to \mathbb{R}$ is non-degenerate. Then over the period space $L:\mathbb{R}^2 \to \mathbb{R}^2$ is well defined. 
\end{proposition}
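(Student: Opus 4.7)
The plan is to show two things under the non-degeneracy assumption: first, that for every $(A,B)\in\mathbb{R}^2$ there exists a harmonic 1-form $\omega$ with $\sum_{\gamma_1}\omega=A$ and $\sum_{\gamma_2}\omega=B$; second, that this $\omega$ is unique. Then $L(A,B)$ can be unambiguously defined as the periods of its conjugate $*\omega=c\omega$.

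The key lemma is that $\ker\Delta$ consists exactly of the constant functions, so that after fixing $f_o=0$ at a basepoint, the reduced Laplacian $\Delta_{\bar o\bar o}$ is invertible. To prove this, I observe that for any $f\in\mathbb{R}^V$,
\[
f^T\Delta f = f^T d^T C d f = (df)^T C (df) = E_c(df).
\]
Since $df$ is an exact 1-form, it is in particular closed, so $\Delta f=0$ forces $E_c(df)=0$, and non-degeneracy yields $df=0$, i.e.\ $f$ is constant. This is the step where the non-degeneracy hypothesis is really used, and it is the main (but mild) obstacle: although $C$ may now be indefinite, positive definiteness of $E_c$ on the space of closed 1-forms is all that is needed.

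With the lemma in hand, existence follows by exactly the derivation preceding the proposition: writing an arbitrary closed 1-form with periods $(A,B)$ as $\omega = df + M\binom{A}{B}$, harmonicity is the equation $\Delta f = -d^T C M\binom{A}{B}$, which has a solution $f_{\bar o}= -\Delta_{\bar o\bar o}^{-1} d_{\bar o}^T C M\binom{A}{B}$ by invertibility of $\Delta_{\bar o\bar o}$. For uniqueness, if $\omega_1$ and $\omega_2$ are harmonic with the same periods, their difference is closed with zero periods and hence exact: $\omega_1-\omega_2 = dg$. Harmonicity gives $\Delta g=0$, so by the lemma $g$ is constant, forcing $\omega_1=\omega_2$. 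Therefore the conjugate periods of the unique such $\omega$ depend only on $(A,B)$, and $L:\mathbb{R}^2\to\mathbb{R}^2$ is a well-defined linear map.
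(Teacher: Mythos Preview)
Your proof is correct and follows essentially the same approach as the paper. Both arguments hinge on the observation that non-degeneracy forces any harmonic exact 1-form to vanish (equivalently, $\ker\Delta$ consists of constants): the paper writes this as $\sum c_{ij}\omega_{ij}^2 = \sum_i f_i\sum_j c_{ij}(f_j-f_i)=0$, while you phrase it as $f^T\Delta f = E_c(df)$. The only minor difference is that the paper deduces existence from uniqueness by a dimension count, whereas you invoke the explicit reduced-Laplacian formula from Section~6; both are equally valid and equally short.
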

\begin{proof}
	We first show that if $\omega$ is a harmonic 1-form with vanishing periods, then it must be trivial. Observe that  $\omega$ having vanishing periods implies there exists $f:V \to \mathbb{R}$ such that $\omega_{ij}= f_j - f_i$. Thus
	\[
	D_c(\omega)= \frac{1}{2}\sum_{ij\in E} c_{ij} \omega_{ij}^2 = \sum_{i\in V} f_i \left( \sum_j c_{ij} (f_j - f_i) \right)=0 
	\]
	because of the harmonicity
	\[
	0= \sum_j c_{ij} \omega_{ij} = \sum_j c_{ij} (f_j - f_i) \quad \forall i \in V.
	\]
	Thus, the non-degeneracy of the edge weight yields $\omega \equiv 0$. It implies the uniqueness of the harmonic 1-form with any given periods if exist. Indeed, using a dimension argument, we further deduce that for any prescribed period $(A,B)$, there exists a unique harmonic 1-form $\omega$ such that
	\[
	\sum_{\gamma_1} \omega = A,  \quad 	\sum_{\gamma_2} \omega = B
	\]
	Thus the map $L:\mathbb{R}^2 \to \mathbb{R}^2$ as in Section \ref{sec:conjugate} is well defined. 
\end{proof}

For non-degenerate edge weights, one can check Proposition \ref{prop:harmonicaction}, \ref{prop:minen} and \ref{prop:reci} hold and the proofs remain the same. We have a generalization of Theorem \ref{thm}.

  \begin{theorem}\label{ethm}

	Let $(V,E,F)$ be a cellular decomposition of a topological torus and $c:E \to \mathbb{R}$ be non-degenerate edge weight.  Then the following statements on Euclidean structure $\tau$, harmonic map $h_{\tau}$ as well as a positive number $k$ are equivalent:  
\begin{enumerate}[(i)]
	\item $(\tau, h_{\tau})$ is the unique minimizer of $\mathcal{D}_c$ with value $k$.
	\item The conjugate map of harmonic map $h_{\tau}:V \to S_{\tau}$ projects to the same torus $S_{\tau}$ up to scaling $k>0$.
\end{enumerate}
\end{theorem}

In this case, the harmonic map generally does not gives a Delaunay decomposition since the edge weight might take negative values. 

\section*{Acknowledgment}

The author would like to thank Toru Kajigaya, Richard Kenyon, Benedikt Kolbe and Yanwen Luo for comments on the draft and fruitful discussions. 

	\bibliographystyle{plainurl}
	\bibliography{torus}
	
\end{document}